\title{Axioms for a local Reidemeister trace in fixed point and
  coincidence theory on differentiable manifolds}
\author{P. Christopher Staecker}
\newcommand{\adm}{\mathcal C}
\newcommand{\liftinc}{\lift{\imath}}
\newcommand{\coinax}{coincidence of lifts}
\newcommand{\Coinax}{Coincidence of lifts}
\newcommand{\lift}{\widetilde}
\newcommand{\RT}{\text{\textit{RT}}}
\newcommand{\Z}{\mathbb{Z}}
\newcommand{\Reid}{\mathcal{R}}
\DeclareMathOperator{\tr}{tr}
\DeclareMathOperator{\ind}{ind}
\DeclareMathOperator{\id}{id}
\DeclareMathOperator{\Coin}{Coin}
\DeclareMathOperator{\Fix}{Fix}
\newtheorem{thm}{Theorem}
\newtheorem{lem}{Lemma}
\newtheorem{prop}{Proposition}
\newtheorem{axiom}{Axiom}
\theoremstyle{plain}
\begin{document}
\bibliographystyle{hplain}

\maketitle

\begin{abstract}
We give axioms which characterize the local Reidemeister trace for orientable
differentiable manifolds.
The local Reidemeister trace in fixed point theory is already
known, and we provide both uniqueness and existence results for the
local Reidemeister trace in coincidence theory.
\end{abstract}

\section{Introduction}
The Reidemeister trace is a fundamental invariant in topological fixed
point theory, generalizing both the Lefschetz and Nielsen
numbers. It was originally defined by Reidemeister in \cite{reid36}.
A more modern treatment, under the name ``generalized Lefschetz
number,'' was given by Husseini in \cite{huss82}. 

If $X$ is a finite connected CW-complex with universal covering space
$\lift X$ and fundamental group $\pi$, then the cellular chain complex
$C_q(\lift X)$ is a free 
$\Z\pi$-module. If $f:X \to X$ is a cellular map and $\lift
f:\lift X \to \lift X$ is a lift of $f$, then the induced map $\lift
f_q: C_q(\lift X) \to C_q(\lift X)$ can be viewed as a matrix with
entries in $\Z\pi$ (with respect to some chosen $\Z\pi$
basis for $C_q(\lift X)$). We then define 
\[ \RT(f,\lift f) = \sum_{q=0}^{\infty} (-1)^q \rho(\tr(\lift f_q)), \]
where $\tr$ is the sum of the diagonal entries of the matrix, and
$\rho$ is the projection into the ``Reidemeister classes'' of
$\pi$. The Reidemeister trace, then, is an element of $\Z R$,
where $R$ is the set of Reidemeister classes. Wecken, in
\cite{weck41}, proved what we will refer to as the \emph{Wecken Trace
  Theorem}, that
\[ \RT(f,\lift f) = \sum_{[\alpha] \in R} \ind([\alpha])\, [\alpha],
\]
where $\ind([\alpha])$ is the index of the Nielsen fixed point class
associated to $[\alpha]$ (see e.g. \cite{jian83}). Thus the number of
terms appearing in the Reidemeister trace with nonzero coefficient is
equal to the Nielsen number of $f$, and by the Lefschetz-Hopf Theorem, the
sum of the coefficients is equal to the Lefschetz number of $f$.

Recent work of Furi, Pera, and Spadini in \cite{fps04} has given a new
proof of the uniqueness of the fixed point index on orientable manifolds
with respect to three natural axioms. In \cite{stae07} their
approach was extended to the coincidence index.
The result is the following theorem:
\begin{thm}\label{indexuniqueness}
Let $X$ and $Y$ be oriented differentiable manifolds of the same
dimension. The coincidence index $\ind(f,g,U)$ of two mappings $f,g:X
\to Y$ over some open set $U \subset X$
is the unique integer-valued function satisfying the following axioms:
\begin{itemize}
\item (Additivity) If $U_1$ and $U_2$ are disjoint open subsets of $U$
  whose union contains all coincidence points of $f$ and $g$ on $U$, then
\[ \ind(f,g,U) = \ind(f,g,U_1) + \ind(f,g,U_2). \]
\item (Homotopy) If $f$ and $g$ are ``admissably homotopic'' to $f'$
  and $g'$, then
\[ \ind(f,g,U) = \ind(f',g',U) \]
\item (Normalization) If $L(f,g)$ denotes the coincidence Lefschetz
  number of $f$ and $g$, then
\[ \ind(f,g,X) = L(f,g). \]
\end{itemize}
\end{thm}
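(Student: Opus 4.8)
Since the coincidence index is already known to satisfy Additivity, Homotopy, and Normalization, the real content of the theorem is uniqueness: if $I(f,g,U)$ is any $\Z$-valued function on admissible triples obeying the three axioms, one must show $I = \ind$. The plan is to use the axioms to reduce an arbitrary admissible triple to a short list of standard models, to use Normalization to evaluate $I$ on those models, and then to observe that the same chain of reductions and the same evaluation apply verbatim to $\ind$, forcing $I = \ind$ everywhere.

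The first reductions use Additivity alone. Degenerate choices of $U_1,U_2$ give $I(f,g,\emptyset)=0$ and, more generally, $I(f,g,U)=0$ whenever $\Coin(f,g)\cap U=\emptyset$ (an excision/localization property). Because admissibility forces $\Coin(f,g)\cap U$ to be compact, Additivity then lets us replace $U$ by any finite disjoint family of open sets covering $\Coin(f,g)\cap U$. Taking these to be small balls lying inside coordinate charts around the (finitely many, after the next step) coincidence points, it suffices to pin down $I$ when $X$ is an open subset of $\mathbb{R}^n$, $Y=\mathbb{R}^n$, and $\Coin(f,g)$ is a single point $x_0$.

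Next comes the Homotopy reduction, which is where differentiability enters. A Thom transversality argument — perturbing $g$ by a generic translation, an admissible deformation since $\Coin(f,g)\cap U$ is compact and hence bounded away from $\partial U$ — makes $(f,g)\colon X\to\mathbb{R}^n\times\mathbb{R}^n$ transverse to the diagonal, so that the coincidence points become isolated and nondegenerate, i.e. $df_{x_0}-dg_{x_0}$ is invertible. After separating them by Additivity, a straight-line homotopy on a small enough ball replaces $f$ and $g$ by their first-order Taylor approximations, and a short explicit homotopy then brings the pair to the form $f(x)=c+Ax$, $g\equiv c$ with $A=df_{x_0}-dg_{x_0}$ still invertible; connecting $A$ to the identity or to a fixed reflection inside $GL_n(\mathbb{R})$ (its two path components) leaves exactly two standard models, one for each sign of $\det(df_{x_0}-dg_{x_0})$.

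Finally, Normalization evaluates $I$ on the two models. Alter $f$ outside a large ball to be constant with value different from that of $g$ and extend both maps over the one-point compactification $S^n$; the extended pair is admissible, its only coincidence being $x_0$, with the point at infinity not a coincidence. Normalization gives $I(f,g,S^n)=L(f,g)$, and Additivity isolates $I$ on a small ball about $x_0$ as this coincidence Lefschetz number, which a direct homological computation on $S^n$ shows equals $(-1)^n\operatorname{sign}\det(df_{x_0}-dg_{x_0})$, up to the fixed sign built into the definition of $L$ for coincidences. The identical computation applies to $\ind$, so $I$ and $\ind$ agree on both standard models and hence, by the reductions above, on every admissible triple. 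The step I expect to be most delicate is this last one: arranging the compactification so that the extended pair is genuinely admissible, checking that the point at infinity contributes nothing, and tracking the orientation signs in the coincidence Lefschetz number on $S^n$ carefully enough that they match the sign convention of the index.
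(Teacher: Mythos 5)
The paper does not prove Theorem~\ref{indexuniqueness}; it is imported as a known result from \cite{fps04} (fixed points) and \cite{stae07} (coincidences), so there is no proof in this paper to compare against. Your sketch does capture the general strategy of those references --- additivity for excision, homotopy plus differentiability for transversality and local linearization, and normalization to evaluate the resulting standard models --- and the first two reduction stages are essentially the ones that appear there.

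The step that I do not think survives scrutiny as written is the final one. The normalization axiom is a statement about $\ind(f,g,X)$ for the \emph{given} pair $(X,Y)$; the theorem fixes $X$ and $Y$ at the outset, so nothing in the axiom set entitles you to evaluate your candidate function $I$ on $(S^n,S^n)$ and transport the answer back to the original manifolds. To make the compactification argument rigorous you would either have to add an axiom (commutativity, or a compatibility with open embeddings of one manifold pair into another), or reinterpret the uniqueness claim as being about a single function defined simultaneously over \emph{all} pairs of oriented $n$-manifolds --- which is not what the statement says, and which you would then need to prove matches the intended meaning. The approach of \cite{fps04} and \cite{stae07} avoids this by staying inside the original manifold: after the transversality reduction one pins down the index of a single nondegenerate coincidence point by auxiliary \emph{admissible homotopies within a chart} (e.g.\ comparing a map with one positive and one negative nondegenerate point against a coincidence-free map, so that additivity forces the two local contributions to sum to zero), with the normalization axiom entering only on the given $(X,Y)$. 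Your last paragraph also leaves unresolved the sign bookkeeping in the coincidence Lefschetz number on $S^n$ (which involves Poincar\'e duality, not just traces on $H_*$), and you rightly flag this; but the more fundamental gap is the change of manifold, which is not just delicate but outside the hypotheses as stated.
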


In the spirit of the above theorem, we demonstrate the 
existence and uniqueness of a local Reidemeister trace in
coincidence theory subject to five axioms. A local Reidemeister trace
for fixed point theory was given by Fares and Hart in
\cite{fh94}, but no Reidemeister trace (local or otherwise) has
appeared in the literature for coincidence theory. 

We note that recent work by Gon\c calves and Weber in \cite{gw06}
gives axioms for the Reidemeister trace in fixed point theory using
entirely different methods. Their work uses no locality properties,
and is based on axioms for the Lefschetz number by Arkowitz and
Brown in \cite{ab04}.

In Section \ref{prelim} we present our axiom set, and we prove the
uniqueness in coincidence theory in Section \ref{Suniqueness}. In the
special case of local fixed point theory, we can obtain a slightly
stronger uniqueness result which we discuss in Section
\ref{fixpt}. Section \ref{existence} is a demonstration of the existence
in the setting of coincidence theory. 

This paper contains pieces of the author's doctoral dissertation. The
author would like to thank his dissertation advisor Robert F. Brown
for assistance with both the dissertation work and with this paper. The
author would also like to thank Peter Wong, who guided the early
dissertation work and interested him in the coincidence Reidemeister trace.

\section{The Axioms} \label{prelim}
Throughout the paper, unless otherwise stated, let $X$ and $Y$ denote
connected orientable differentiable manifolds of the same
dimension. All maps $f,g:X \to Y$ will be assumed to be
continuous. The universal covering spaces of $X$ and $Y$ 
will be denoted $\lift X$ and $\lift Y$ with projection maps $p_X:
\lift X \to X$ and $p_Y:\lift Y \to Y$. A \emph{lift} of some map
$f:X \to Y$ is a map $\lift f:\lift X \to \lift Y$ with $p_Y \circ
\lift f = f \circ p_X$. 

Let $f,g:X \to Y$ be maps, with induced homomorphisms $\phi,\psi:\pi_1(X)
\to \pi_1(Y)$ respectively. We will view elements of $\pi_1(X)$ and
$\pi_1(Y)$ as covering transformations, so that for any $\lift x \in
\lift X$ and $\sigma \in \pi_1(X)$, we have $\lift f(\sigma \lift x) =
\phi(\sigma) \lift f(\lift x)$ and $\lift g(\sigma\lift x) =
\psi(\sigma)\lift g(\lift x)$. 

We will partition the elements of $\pi_1(Y)$ into equivalence classes
defined by the ``doubly twisted conjugacy'' relation:
\[ \alpha \sim \beta \iff \alpha = \psi(\sigma)^{-1}\beta
\phi(\sigma). \]
The equivalence classes with respect to this relation
(denoted e.g. $[\alpha]$) are called \emph{Reidemeister classes}. The
set of Reidemeister classes is denoted $\Reid[f,g]$. 

For any set $S$, let $\Z S$ denote the free abelian group generated by
$S$, whose elements we write as sums of elements of $S$ with integer
coefficients. For any such abelian group, there is a homomorphism
$c:\Z S \to \Z$ defined as the sum of the coefficients:
\[ c\left( \sum_i k_i s_i \right) = \sum_i k_i, \]
for $s_i \in S$ and $k_i \in \Z$, and $i$ ranging over a finite
set. 

For some maps $f,g:X \to Y$ and an open subset $U\subset X$, let 
\[ \Coin(f,g,U) = \{ x\in U \mid f(x) = g(x) \}. \]
We say that the triple $(f,g,U)$ is \emph{admissable} if $\Coin(f,g,U)$ is
compact. Two triples $(f,g,U)$ and $(f',g',U)$ are \emph{admissably homotopic}
if there is some pair of homotopies $F_t,G_t:X \times [0,1] \to X$ of
$f,g$ to $f',g'$ with $\{ (x,t) \in U \times [0,1] \mid F_t(x) =
G_t(x) \}$ compact. 

Let $\adm(X,Y)$ be the set of \emph{admissable tuples}, all
tuples of the form $(f, \lift f, g, \lift g, U)$ where $f,g:X \to Y$
are maps, $(f,g,U)$ is an admissable triple, and $\lift f$ and $\lift
g$ are lifts of $f$ and $g$. 

Let $(f, \lift f, g, \lift g, U), (f', \lift f', g', \lift g', U) \in
\adm(X,Y)$ with $(f,g,U)$ admissably homotopic to $(f',g',U)$ by
homotopies $F_t, G_t$. By the homotopy lifting property, there are
unique lifted homotopies $\lift F_t, \lift G_t: \lift X \times [0,1]
\to \lift Y$ with $\lift F_0 = \lift f$ and $\lift G_0 = \lift g$. If
we additionaly have $\lift F_1 = \lift f'$ and $\lift G_1 = \lift g'$,
then we say that the tuples $(f, \lift f, g, \lift g, U)$ and
$(f',\lift f', g', \lift g', U)$ are \emph{admisssably homotopic}.

Throughout the following, let $\RT$
be any function which to an admissable tuple $(f,\lift f, g, \lift g, U) \in
\adm(X,Y)$ associates an element of $\Z\Reid[f,g]$. Our first three
axioms for the local Reidemeister trace are modeled after the axioms
of Theorem \ref{indexuniqueness}.

\begin{axiom}[Additivity] Given $(f, \lift f, g,
  \lift g, U) \in \adm(X,Y)$, if $U_1$ and $U_2$ are disjoint open subsets
  of $U$ with $\Coin(f,g,U) \subset U_1 \cup U_2$, then
\[ \RT(f,\lift f, g, \lift g, U) = \RT(f, \lift f,  g, \lift g, U_1) +
  \RT(f, \lift f, g, \lift g, U_2). \]
\end{axiom}

\begin{axiom}[Homotopy] If $(f, \lift f, g, \lift g, U)$ and
  $(f',\lift f', g',\lift g', U)$ are admissably homotopic admissable
  tuples,  then
\[ \RT(f, \lift f, g, \lift g, U) = \RT(f', \lift f', g', \lift g',
U). \]
\end{axiom}

\begin{axiom}[Normalization]
If $(f,\lift f,g, \lift g, X) \in \adm(X,Y)$, then 
\[ c(\RT(f,\lift f, g, \lift g, X)) = L(f,g), \]
where $L(f,g)$ is the Lefschetz number of $f$ and $g$.
\end{axiom}

We will require one additional axiom to make some connections with
Nielsen theory, based on a well-known property of the Reidemeister
trace:
\begin{axiom}[Lift invariance]
For any $(f, \lift f, g, \lift
g, U) \in \adm(X,Y)$, and any $\alpha, \beta \in \pi_1(Y)$ we have
\[ c(\RT(f, \lift f, g, \lift g, U)) = c(\RT(f, \alpha \lift f, g,
\beta \lift g, U)). \]
\end{axiom}

The four axioms above are enough to demonstrate some relationships
between $\RT$ and the coincidence index.

\begin{prop} \label{coeffs} If $\RT$ satisfies the homotopy,
additivity, normalization, and lift invariance axioms, then 
\[ c(\RT(f, \lift f, g, \lift g, U)) = \ind(f,g,U) \]
for any $(f, \lift f, g, \lift g, U) \in \adm(X,Y)$, where $\ind$
denotes the coincidence index (see \cite{gonc05}).
\end{prop}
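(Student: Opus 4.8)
The plan is to reduce the claim to the known uniqueness theorem for the coincidence index (Theorem~\ref{indexuniqueness}) by showing that the function $(f,\lift f, g, \lift g, U) \mapsto c(\RT(f,\lift f, g, \lift g, U))$ satisfies the three axioms characterizing $\ind$. Since $c$ does not see the lifts $\lift f, \lift g$ in any essential way once we have lift invariance, the first step is to observe that lift invariance lets us regard $c(\RT(\cdot))$ as a genuine function of the triple $(f,g,U)$ alone: any two admissable tuples lying over $(f,g,U)$ differ by a change of lifts $\lift f \mapsto \alpha\lift f$, $\lift g \mapsto \beta\lift g$, and the Lift Invariance Axiom says $c(\RT)$ is unchanged. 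So it suffices to check the additivity, homotopy, and normalization axioms of Theorem~\ref{indexuniqueness} for this induced function of triples.

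Additivity and normalization are essentially immediate: applying $c$ to both sides of the Additivity Axiom for $\RT$ and using that $c:\Z\Reid[f,g]\to\Z$ is a homomorphism gives the additivity axiom for $c(\RT)$; the Normalization Axiom for $\RT$ literally states $c(\RT(f,\lift f,g,\lift g,X)) = L(f,g)$, which is the normalization axiom for the index. The one point requiring a little care is homotopy: the Homotopy Axiom for $\RT$ applies to \emph{admissably homotopic tuples}, which by the definition in Section~\ref{prelim} requires not only an admissable homotopy of the triples $(f,g,U)\htp(f',g',U)$ but also that the lifted homotopies $\lift F_t, \lift G_t$ (uniquely determined by $\lift F_0 = \lift f$, $\lift G_0 = \lift g$) satisfy $\lift F_1 = \lift f'$, $\lift G_1 = \lift g'$. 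Given an admissable homotopy of triples, one can always choose lifts $\lift f', \lift g'$ of $f', g'$ so that the tuples become admissably homotopic — namely take $\lift f' = \lift F_1$ and $\lift g' = \lift G_1$ — and then the Lift Invariance Axiom absorbs the discrepancy between this particular choice of terminal lifts and any other. Thus $c(\RT(f,\lift f,g,\lift g,U)) = c(\RT(f',\lift f',g',\lift g',U))$ whenever $(f,g,U)\htp(f',g',U)$ admissably, regardless of the lifts on either end, which is exactly the homotopy axiom for the index.

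Having verified all three axioms, Theorem~\ref{indexuniqueness} gives $c(\RT(f,\lift f,g,\lift g,U)) = \ind(f,g,U)$, completing the proof. The \emph{main obstacle} — really the only subtlety — is the bookkeeping in the homotopy step: one must be careful that the ``admissably homotopic'' relation on tuples is strictly stronger than on triples, and that Lift Invariance is precisely what bridges the gap so that $c(\RT)$ descends to a homotopy-invariant function of triples. Everything else is a formal consequence of $c$ being a homomorphism on free abelian groups and of the stated axioms.
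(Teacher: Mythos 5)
Your proof follows essentially the same approach as the paper: define $\omega = c\circ \RT$, use lift invariance to descend $\omega$ to a function on admissable triples, verify the three axioms of Theorem~\ref{indexuniqueness}, and invoke that theorem's uniqueness. The only difference is that you spell out the homotopy step (choosing terminal lifts $\lift f' = \lift F_1$, $\lift g' = \lift G_1$ and then absorbing any discrepancy via lift invariance) where the paper simply says the implication is clear once lift parameters are disregarded — your added care there is correct and fills in a point the paper leaves implicit.
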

\begin{proof}
Let $\omega = c\circ \RT: \adm(X,Y) \to \Z$.
By the lift invariance axiom, $\omega$ is independent of the
choice of lifts. Thus $\omega$ can be viewed as a function
from the set of all admissable \emph{triples} to $\Z$. It is clear
that $\omega$ satisfies the three axioms of Theorem
\ref{indexuniqueness}, since they 
are implied by our additivity, homotopy, and normalization
axioms for $\RT$ (disregarding the lift parameters). Thus $\omega$
is the coincidence index.
\end{proof}

\begin{prop}\label{coinprop}
If $\RT$ satisfies the additivity, homotopy, normalization, and lift
invariance axioms
and $c(\RT(f,\lift f, g, \lift g, U)) \neq 0$, then there is some
$\sigma \in \pi_1(Y)$ such that $\sigma \lift f$ and $\lift g$ have a
coincidence on $p_X^{-1}(U)$.
\end{prop}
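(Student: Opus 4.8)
The plan is to derive the statement from the additivity axiom together with a one‑line covering‑space argument; the hypothesis $c(\RT(f,\lift f,g,\lift g,U))\neq 0$ is really only used to guarantee that $\Coin(f,g,U)$ is nonempty, after which the conclusion is essentially automatic.

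First I would rule out the possibility that $\Coin(f,g,U)=\emptyset$. Taking $U_1=U_2=\emptyset$ in the additivity axiom — these are disjoint open subsets of $U$ whose (empty) union vacuously contains $\Coin(f,g,U)$ when that set is empty — gives $\RT(f,\lift f,g,\lift g,U)=\RT(f,\lift f,g,\lift g,\emptyset)+\RT(f,\lift f,g,\lift g,\emptyset)$, and applying additivity again with $U$ replaced by $\emptyset$ forces $\RT(f,\lift f,g,\lift g,\emptyset)=0$. Hence if $f$ and $g$ had no coincidence on $U$ we would get $\RT(f,\lift f,g,\lift g,U)=0$, so in particular $c(\RT(f,\lift f,g,\lift g,U))=0$, contradicting the hypothesis. (Alternatively, Proposition \ref{coeffs} identifies $c(\RT(f,\lift f,g,\lift g,U))$ with $\ind(f,g,U)$, and one may instead quote the analogous nonemptiness property of the coincidence index.) So there is a point $x\in U$ with $f(x)=g(x)$.

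Next I would lift this coincidence. Choose any $\lift x\in p_X^{-1}(x)$; then $\lift x\in p_X^{-1}(U)$, and since $\lift f,\lift g$ are lifts we have $p_Y(\lift f(\lift x))=f(p_X(\lift x))=f(x)=g(x)=p_Y(\lift g(\lift x))$, so $\lift f(\lift x)$ and $\lift g(\lift x)$ lie in the same fiber of $p_Y$. As $\pi_1(Y)$ acts transitively on the fibers of $p_Y$ by covering transformations, there is $\sigma\in\pi_1(Y)$ with $\sigma\lift f(\lift x)=\lift g(\lift x)$. This $\sigma$ is the element sought: $\sigma\lift f$ and $\lift g$ coincide at the point $\lift x\in p_X^{-1}(U)$.

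I do not anticipate a genuine obstacle; the only step deserving care is the first one, where the ``empty set'' instance of the additivity axiom is used to pass from the algebraic condition $c(\RT)\neq 0$ to an actual geometric coincidence point. It is a standard maneuver, but worth spelling out precisely because the axioms are phrased in terms of $\RT$ rather than directly in terms of $\Coin(f,g,U)$.
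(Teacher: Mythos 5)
Your proof is correct, and the covering‑space lifting argument in the second paragraph is essentially identical to the paper's. The first step, however, is handled differently: the paper immediately invokes Proposition \ref{coeffs} to identify $c(\RT(f,\lift f,g,\lift g,U))$ with $\ind(f,g,U)$ and then quotes the solution property of the coincidence index (nonzero index implies a coincidence), whereas your primary route derives nonemptiness of $\Coin(f,g,U)$ directly from the additivity axiom via the empty‑set trick, only mentioning the Proposition \ref{coeffs} route as an alternative. Your argument is more self‑contained in that it uses only one of the four hypothesized axioms for this step, though in the context of the paper it is arguably more economical to reuse Proposition \ref{coeffs}, which is already in hand. One small point worth flagging in the additivity step: you take $U_1=U_2=\emptyset$, which requires reading ``disjoint open subsets'' as permitting the two sets to coincide when both are empty. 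That reading is standard (the intersection is empty), but it is the sort of edge case a referee might query, and it is exactly the case needed to force $\RT(f,\lift f,g,\lift g,\emptyset)=0$ and hence $\RT(f,\lift f,g,\lift g,U)=0$; the Proposition \ref{coeffs} route sidesteps this entirely by pushing the work onto the known properties of the index.
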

\begin{proof} 
By Proposition \ref{coeffs}, if $c(\RT(f,\lift f, g, \lift g, U)) \neq 0$
then $\ind(f,g,U) \neq 0$, and so $f$ and $g$ have a coincidence on
$U$. Let $x \in U$ be this coincidence point, and choose $\lift x \in
p_X^{-1}(x)$. Then since $\lift f$
and $\lift g$ are lifts, the points $\lift f(\lift x)$ and $\lift
g(\lift x)$ will project to the same point of $Y$ by $p_Y$. Thus there
is some covering transformation $\sigma$ with $\sigma \lift f(\lift x)
= \lift g(\lift x)$.
\end{proof}

The four axioms given above are not sufficient to uniquely
characterize the Reidemeister trace in fixed point or coincidence
theory. For instance, the function defined by
\[ T(f,\lift f, g, \lift g, U) = \ind(f,g,U)[1], \]
where [1] is the Reidemeister class of the trivial element $1 \in \pi_1(Y)$,
satisfies all of the axioms above, but provides none of the
expected data concerning $\Reid[f,g]$, and so that function cannot be the
Reidemeister trace.

An additional axiom is needed, one which somehow indicates the
elements of $\Reid[f,g]$ which are to appear in the Reidemeister
trace. Our final axiom is a sort of strengthening of Proposition
\ref{coinprop}, which specifies the Reidemeister data associated to
the coincidence points. 

\begin{axiom}[\Coinax] If $[\alpha]$ appears with nonzero
  coefficient in $\RT(f,\lift f,
g, \lift g, U)$, then $\alpha \lift f$ and $\lift g$ have a
coincidence on $p_X^{-1}(U)$. 
\end{axiom}

Any function $\RT$ which to a tuple $(f,\lift f, g, \lift g,
U) \in \adm(X,Y)$ associates an element of $\Z\Reid[f,g]$, and 
satisfies the additivity, homotopy, normalization, lift
invariance, and \coinax\ axioms we will call a \emph{local
Reidemeister trace}. Our main result (Theorem \ref{uniqueness})
states that there is a unique such function.

\section{Uniqueness} \label{Suniqueness}
Let $(f, \lift f, g, \lift g, U) \in \adm(X,Y)$,
let $\lift U = p_X^{-1}(U)$, and
let 
\[ C(\lift f, \lift g, \lift U, [\alpha]) = p_X(\Coin(\alpha \lift f, \lift
g, \lift U)).\] 
For each $\alpha$ we have $C(\lift f, \lift g, \lift
U, [\alpha]) \subset \Coin(f,g,U)$, and such coincidence sets are
called \emph{coincidence 
classes}. That these classes are well defined is a consequence of the
following lemma, which appears in slightly different language as Lemma
2.3 of \cite{dj93}. 

\begin{lem}\label{coinlift}
Let $\alpha, \beta \in \pi_1(Y)$, maps $f,g:X \to Y$, and an open subset
$U \subset X$ be given. Then:
\begin{itemize}
\item $[\alpha] = [\beta]$ if and only if 
\[ p_X \Coin(\alpha\lift f, \lift g, \lift U) = p_X \Coin(\beta\lift
f, \lift g, \lift U) \]
for any lifts $\lift f, \lift g$. 
\item If $[\alpha] \neq 
[\beta]$, then $p_X \Coin(\alpha \lift f, \lift g, \lift U)$ and $p_X
\Coin(\alpha \lift f, \lift g, \lift U)$ are disjoint for any lifts
$\lift f,\lift g$. 
\end{itemize}
\end{lem}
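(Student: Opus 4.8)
The plan is to work directly with the defining relation $\alpha \sim \beta$, i.e.\ $\alpha = \psi(\sigma)^{-1}\beta\phi(\sigma)$ for some $\sigma \in \pi_1(X)$, and to translate this algebraic condition into a statement about how the covering transformation $\sigma$ carries one coincidence set onto the other. The key computational identity is the twisted equivariance already recorded in the excerpt: for a lift $\lift f$ of $f$ and $\sigma \in \pi_1(X)$ we have $\lift f(\sigma \lift x) = \phi(\sigma)\lift f(\lift x)$, and likewise $\lift g(\sigma\lift x) = \psi(\sigma)\lift g(\lift x)$.

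First I would prove the forward direction of the first bullet. Suppose $[\alpha] = [\beta]$, so $\beta = \psi(\sigma)\alpha\phi(\sigma)^{-1}$ for some $\sigma$. I claim the covering transformation $\sigma$ restricts to a bijection $\Coin(\alpha\lift f,\lift g,\lift U) \to \Coin(\beta\lift f,\lift g,\lift U)$; since $\sigma$ covers the identity on $X$ (it is a deck transformation of $p_X$, and $\lift U = p_X^{-1}(U)$ is $\sigma$-invariant), this immediately gives equality of the two projected sets. To verify the claim, take $\lift x$ with $\alpha\lift f(\lift x) = \lift g(\lift x)$ and compute $\beta\lift f(\sigma\lift x) = \beta\phi(\sigma)\lift f(\lift x) = \psi(\sigma)\alpha\phi(\sigma)^{-1}\phi(\sigma)\lift f(\lift x) = \psi(\sigma)\alpha\lift f(\lift x) = \psi(\sigma)\lift g(\lift x) = \lift g(\sigma\lift x)$, so $\sigma\lift x \in \Coin(\beta\lift f,\lift g,\lift U)$. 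Running the same computation with $\sigma^{-1}$ and the roles of $\alpha,\beta$ swapped shows the map is a bijection.

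For the converse and the second bullet, I would prove the contrapositive of ``$[\alpha]\neq[\beta] \Rightarrow$ disjoint projections,'' which simultaneously handles the ``only if'' half of bullet one. Suppose the projected sets are \emph{not} disjoint: there exist $\lift x_1 \in \Coin(\alpha\lift f,\lift g,\lift U)$ and $\lift x_2 \in \Coin(\beta\lift f,\lift g,\lift U)$ with $p_X(\lift x_1) = p_X(\lift x_2)$, hence $\lift x_2 = \tau\lift x_1$ for some $\tau \in \pi_1(X)$. Then from $\beta\lift f(\lift x_2) = \lift g(\lift x_2)$ and the equivariance relations, $\beta\phi(\tau)\lift f(\lift x_1) = \psi(\tau)\lift g(\lift x_1) = \psi(\tau)\alpha\lift f(\lift x_1)$. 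Now $\lift f(\lift x_1) \in \lift Y$; to conclude $\beta\phi(\tau) = \psi(\tau)\alpha$ as covering transformations I use that covering transformations of $\lift Y$ act freely, so an equation between two of them that holds at a single point holds identically. This yields $\alpha = \psi(\tau)^{-1}\beta\phi(\tau)$, i.e.\ $[\alpha]=[\beta]$. Combining: non-disjoint projections force $[\alpha]=[\beta]$, which is exactly the contrapositive of bullet two, and together with the forward direction of bullet one it also gives the ``only if'' direction (if $[\alpha]\neq[\beta]$ the sets are disjoint, in particular unequal, unless both are empty — and if both are empty they are trivially equal, so one should phrase bullet one's reverse direction carefully, perhaps noting equality of possibly-empty sets still forces $[\alpha]=[\beta]$ only when the sets are nonempty; in the degenerate empty case the statement is handled by observing $\Coin(\alpha\lift f,\lift g,\lift U)=\emptyset$ for one $\alpha$ iff it is empty for all $\alpha$ in the class, which the forward direction already gives).

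The main obstacle is the bookkeeping around the empty case and making the ``equation of deck transformations from one point'' step rigorous: one must be careful that the free action of $\pi_1(Y)$ on $\lift Y$ is what licenses passing from $\beta\phi(\tau)\lift y = \psi(\tau)\alpha\lift y$ to $\beta\phi(\tau)=\psi(\tau)\alpha$. Everything else is the two short twisted-conjugacy computations above. Since the lemma is quoted as essentially Lemma 2.3 of \cite{dj93}, I would keep the write-up brief, emphasizing only these equivariance computations and the freeness point.
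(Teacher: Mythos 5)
The paper offers no proof of this lemma at all: it simply cites it as (a restatement of) Lemma~2.3 of Dobre\'nko--Jezierski~\cite{dj93}, so there is no in-paper argument to compare against. Your proof is the standard elementary covering-space argument that one would expect to find in that reference. The two computations are correct: the twisted equivariance $\lift f(\sigma\lift x)=\phi(\sigma)\lift f(\lift x)$, $\lift g(\sigma\lift x)=\psi(\sigma)\lift g(\lift x)$ makes the deck transformation $\sigma$ carry $\Coin(\alpha\lift f,\lift g,\lift U)$ onto $\Coin(\beta\lift f,\lift g,\lift U)$ when $\beta=\psi(\sigma)\alpha\phi(\sigma)^{-1}$, and conversely a common projected point $p_X(\lift x_1)=p_X(\lift x_2)$ with $\lift x_2=\tau\lift x_1$ yields $\beta\phi(\tau)=\psi(\tau)\alpha$ via the free action of $\pi_1(Y)$ on $\lift Y$. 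Invoking freeness to pass from agreement at the single point $\lift f(\lift x_1)$ to equality of deck transformations is exactly the right justification.

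Your observation about the empty case is a genuine (if minor) caveat: as literally written, the ``only if'' half of the first bullet is false when both coincidence sets are empty, since two inequivalent $\alpha,\beta$ can both give empty sets. Your proposed patch does not actually repair the literal statement (the forward direction tells you that emptiness is constant on a Reidemeister class, but it cannot rule out two distinct classes both being empty); the honest resolution is that the intended content, and the only part the paper ever uses, is the forward implication plus the disjointness in the second bullet, which together say the nonempty sets $C(\lift f,\lift g,\lift U,[\alpha])$ are well-defined and pairwise disjoint. You should flag this as a defect in the statement rather than suggest the argument closes the gap. Apart from that phrasing issue, the proof is sound and complete.
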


Given the above notation, the \coinax\ axiom could be restated
as follows: If $[\alpha]$ appears with nonzero coefficient in
$\RT(f,\lift f, g, \lift g, U)$, then $C(\lift f, \lift g, \lift U,
[\alpha])$ is nonempty. For each coincidence point $x$ in $U$, define
$[x_{\lift f,\lift g}] \in \Reid[f,g]$ as that class $[\alpha]$ for
which $x \in C(\lift f, \lift g, \lift U, [\alpha])$.

\begin{thm} \label{weckentrace}
If $\RT$ is a local Reidemeister trace and $\Coin(f,g,U)$ is a set of
isolated points, then
\[ \RT(f,\lift f, g, \lift g, U) = \sum_{x \in \Coin(f,g,U)} \ind(f,g,U_x)
   [x_{\lift f,\lift g}], \]
where $U_x$ is an isolating neighborhood for the coincidence point $x$.
\end{thm}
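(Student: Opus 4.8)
The plan is to reduce the general statement to a computation on a small neighborhood of a single coincidence point, using the additivity and homotopy axioms, and then pin down the coefficient via the normalization and \coinax\ axioms together with Proposition \ref{coeffs}. First I would use the fact that $\Coin(f,g,U)$ is a compact set of isolated points, hence finite, say $\{x_1,\dots,x_n\}$. Choose pairwise disjoint isolating neighborhoods $U_{x_i} \subset U$. Applying the additivity axiom inductively (strictly, first discarding the open complement of $\bigcup U_{x_i}$, which contains no coincidence points and on which $\RT$ must vanish by additivity with $U_1 = U_2 = \emptyset$), we get
\[ \RT(f,\lift f, g, \lift g, U) = \sum_{i=1}^n \RT(f,\lift f, g, \lift g, U_{x_i}). \]
So it suffices to prove the theorem when $U = U_x$ is an isolating neighborhood of a single coincidence point $x$, i.e. to show $\RT(f,\lift f, g, \lift g, U_x) = \ind(f,g,U_x)\,[x_{\lift f,\lift g}]$.

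Next I would analyze $\RT(f,\lift f, g, \lift g, U_x)$ term by term. Write $\RT(f,\lift f, g, \lift g, U_x) = \sum_{[\alpha]} k_{[\alpha]}\,[\alpha]$. For each $[\alpha]$ with $k_{[\alpha]} \neq 0$, the \coinax\ axiom forces $C(\lift f,\lift g,\lift U_x,[\alpha])$ to be nonempty, i.e. there is a coincidence point $y \in U_x$ with $[y_{\lift f,\lift g}] = [\alpha]$. But $U_x$ is an isolating neighborhood of $x$, so the only coincidence point in $U_x$ is $x$ itself; hence $[\alpha] = [x_{\lift f,\lift g}]$. Therefore at most one class appears with nonzero coefficient, and
\[ \RT(f,\lift f, g, \lift g, U_x) = k\,[x_{\lift f,\lift g}] \]
for some integer $k$. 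Finally, applying the coefficient-sum homomorphism $c$ and using Proposition \ref{coeffs} (which requires only the additivity, homotopy, normalization, and lift invariance axioms), we get $k = c(\RT(f,\lift f, g, \lift g, U_x)) = \ind(f,g,U_x)$. Summing over the $x_i$ recovers the stated formula.

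I do not expect a serious obstacle here; the argument is essentially bookkeeping once Proposition \ref{coeffs} is in hand. The one point needing a little care is the very first reduction step: one must make sure additivity legitimately allows peeling off the coincidence-free part of $U$ and splitting off one isolated point at a time. Concretely, $\Coin(f,g,U) \subset \left(\bigcup_i U_{x_i}\right) \cup \emptyset$ with the two pieces disjoint open subsets of $U$, so additivity gives $\RT(f,\lift f,g,\lift g,U) = \RT(f,\lift f,g,\lift g,\bigcup_i U_{x_i}) + \RT(f,\lift f,g,\lift g,\emptyset)$, and a further application with $U_1 = U_2 = \emptyset$ shows $\RT(f,\lift f,g,\lift g,\emptyset) = 0$; iterating the disjoint splitting on $\bigcup_i U_{x_i}$ completes the reduction. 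Everything else follows from directly invoking the axioms and the already-established Proposition \ref{coeffs}, so no new technical machinery is required.
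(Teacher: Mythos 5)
Your proposal is correct and follows essentially the same route as the paper: reduce by additivity to a single isolating neighborhood, use the coincidence of lifts axiom to rule out all Reidemeister classes except $[x_{\lift f,\lift g}]$, and then invoke Proposition \ref{coeffs} to identify the coefficient with $\ind(f,g,U_x)$. The extra bookkeeping you include about $\RT(f,\lift f,g,\lift g,\emptyset)=0$ and iterating the disjoint splitting is a fair elaboration of what the paper leaves implicit in the phrase ``by the additivity property.''
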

\begin{proof}
By the additivity property, we need only show that
\[ \RT(f,\lift f, g, \lift g, U_x) = \ind(f, g, U_x) [x_{\lift f,\lift
g}]. \]
First, we observe that no element
of $\Reid[f,g]$ other than $[x_{\lift f, \lift g}]$ appears as a term
with nonzero coefficient in 
$\RT(f,\lift f, g, \lift g, U_x)$: 
If some $[\beta]$ does appear with nonzero coefficient, then we know
by the coincidence of lifts axiom that $\beta \lift f$ and $\lift g$
have a coincidence on $\lift U_x = p_X^{-1}(U_x)$. Projection of this coincidence
point gives a coincidence point in $U_x$ which necessarily must be
$x$, since $x$ is the only coincidence point in $U_x$. Thus $x \in p_X
\Coin(\beta \lift f, \lift g, \lift U_x)$, which means that $[\beta] =
[x_{\lift f, \lift g}]$.

Since $[x_{\lift f, \lift g}]$ is the only element of
$\Reid[f,g]$ appearing in $\RT(f,\lift f, g, \lift g, U)$, we have
\[ \RT(f,\lift f, g, \lift g, U_x) = k [x_{\lift f, \lift g}] \]
for some $k\in \Z$ (possibly $k=0$). Proposition \ref{coeffs} says
that the coefficient sum must equal the index, and so $k =
\ind(f, g, U_x)$ as desired.
\end{proof}

The above is a strong result for maps whose coincidence sets are
isolated. In order to leverage this result for arbitrary maps, we will
make use of a technical lemma, a combination of Lemmas 13 and 
15 from \cite{stae07}.

\begin{lem} \label{mfldisolation} 
Let $(f,g,U)$ be an admissable triple, and let $V \subset U$ be an
open subset containing $\Coin(f,g,U)$ with compact closure $\bar V
\subset U$. Then $(f,g,V)$ is admissably homotopic to an admissable
triple $(f',g',V)$, where $f'$ and $g'$ have isolated coincidence
points in $V$. 
\end{lem}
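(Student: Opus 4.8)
The plan is to establish Lemma~\ref{mfldisolation} by a two-stage deformation, exactly mirroring the structure of Lemmas~13 and~15 of \cite{stae07}. First I would reduce to a situation where the two maps can be compared locally in a single coordinate chart: cover the compact set $\bar V$ by finitely many open sets that are each carried into some chart of $Y$ by both $f$ and $g$, and pass to the corresponding difference map. Concretely, on each such piece one considers $f-g$ (meaning the map into $\mathbb{R}^n$ obtained after composing with the chart of $Y$), so that a coincidence of $f$ and $g$ becomes a zero of a map into Euclidean space. The first stage is then a \emph{compression} step: using that $\Coin(f,g,U)$ is compact and contained in $V$ with $\bar V \subset U$, one chooses a smaller open neighborhood with compact closure and applies a standard homotopy (fixing $f$ and altering only $g$, or vice versa, by a bump-function-weighted correction supported in a collar) to arrange that all coincidence points lie in the interior of a single smooth closed coordinate ball, with none on its boundary. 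Admissibility is preserved throughout because the homotopy moves nothing near $\partial V$.

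The second stage is the genuine \emph{isolation} step, which is where the differentiable structure is essential. Working now in a single chart, I would perturb $g$ (keeping $f$ fixed) by a small $C^0$-approximation that is smooth and transverse-enough so that the difference map $f-g:\mathbb{R}^n \supset W \to \mathbb{R}^n$ has $0$ as a regular value, or at least has only finitely many zeros. The classical tool here is Sard's theorem together with a smoothing approximation: one first $C^0$-approximates $f-g$ by a smooth map $h$ agreeing with $f-g$ outside a neighborhood of the coincidence set, then adds a small constant regular value $-y_0$ (with $y_0$ a regular value of $h$ supplied by Sard) localized by a bump function, so that the new map $h - y_0$ has $0$ as a regular value on the region containing the zeros; since $W$ is relatively compact the preimage of $0$ is then a finite set of isolated points. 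Translating back through the charts, this produces $g'$ with $(f,g',V)$ admissably homotopic to $(f,g,V)$ and $f,g'$ having isolated coincidences, and one can then also perturb $f$ if a symmetric hypothesis is wanted; but perturbing one map suffices. The homotopy from $g$ to $g'$ is small and supported away from $\partial V$, so the homotopy is admissable and the coincidence set of the homotopy stays in a compact subset of $V \times [0,1]$.

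I expect the main obstacle to be purely bookkeeping rather than conceptual: namely, patching the local Euclidean perturbations together into a single globally-defined homotopy $G_t: X \times [0,1] \to Y$ while (a) keeping every intermediate coincidence set compact and inside $V$, and (b) ensuring the finitely many charts' corrections do not reintroduce coincidences where the charts overlap. This is handled by ordering the charts, doing the perturbation one chart at a time on a shrinking family of neighborhoods of the (shrinking) coincidence set, and at each step choosing the approximation fine enough that no new coincidences are created outside the region already under control — a standard but slightly delicate induction. Since Lemma~\ref{mfldisolation} is quoted as a combination of results already proved in \cite{stae07}, in the paper itself this would be stated with a pointer to that reference rather than reproved in full; the sketch above indicates the argument being cited. \qed
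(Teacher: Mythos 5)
This lemma is not proved in the paper at all: the author explicitly states it is ``a combination of Lemmas 13 and 15 from \cite{stae07}'' and leaves the argument to that reference. So there is no in-paper proof to compare against; what can be evaluated is whether your sketch faithfully reconstructs the cited argument and whether the steps you describe would actually go through.

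Your stage two --- smooth the difference map, then use Sard's theorem to shift by a regular value so that the coincidence set becomes a zero-dimensional (hence discrete, hence finite by compactness) set --- is the correct core of the argument and matches the transversality step one would expect from \cite{stae07}. The genuine gap is in your stage one. You claim one can homotope so that ``all coincidence points lie in the interior of a single smooth closed coordinate ball.'' This is false in general: $\Coin(f,g,U)$ is only assumed compact, and nothing prevents it from being, say, all of a compact $X$ (take $f=g$), or a large connected set spanning many charts. No collar-supported bump-function homotopy can compress such a set into one chart, and attempting to prove it can is essentially circular --- shrinking the coincidence set to fit in a chart already requires the isolation you are trying to establish. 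What the first step in \cite{stae07} actually does is a \emph{smooth approximation} of $f$ and $g$ rel a neighborhood of $\partial V$ (so that admissibility is preserved), not a geometric compression of the coincidence set. After that, the transversality perturbation is carried out over a finite cover of the compact set $\bar V$, which is the chart-by-chart induction you do correctly describe in your last paragraph. You should drop or replace the single-chart claim; as written, stage one would fail, and your own closing paragraph already contains the right fix.
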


The above lemma is used to approximate any maps by maps having
isolated coincidence points, and we obtain our uniqueness theorem:
\begin{thm}\label{uniqueness}
There is at most one local Reidemeister trace defined on $\adm(X,Y)$.
\end{thm}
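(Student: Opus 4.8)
The plan is to show that any two local Reidemeister traces $\RT$ and $\RT'$ agree on every tuple $(f,\lift f, g, \lift g, U) \in \adm(X,Y)$, by reducing to the isolated-coincidence case already settled in Theorem \ref{weckentrace}. First I would record that additivity forces $\RT(f,\lift f,g,\lift g,\emptyset)=0$ (apply it with $U_1=U_2=\emptyset$). Then, choosing an open $V$ with $\Coin(f,g,U)\subset V$ and $\bar V\subset U$ compact, additivity with $U_1=V$, $U_2=\emptyset$ gives $\RT(f,\lift f,g,\lift g,U)=\RT(f,\lift f,g,\lift g,V)$, and the same for $\RT'$. So it suffices to compare the two traces on $(f,\lift f,g,\lift g,V)$.

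Next I would invoke Lemma \ref{mfldisolation} to replace $(f,g,V)$ by an admissably homotopic triple $(f',g',V)$ with isolated coincidence points. To feed this into the homotopy axiom I must promote it to an admissable homotopy of \emph{tuples}: using the homotopy lifting property, lift the homotopies $F_t,G_t$ with $\lift F_0=\lift f$, $\lift G_0=\lift g$, and set $\lift f'=\lift F_1$, $\lift g'=\lift G_1$. Then $(f,\lift f,g,\lift g,V)$ and $(f',\lift f',g',\lift g',V)$ are admissably homotopic, so the homotopy axiom yields $\RT(f,\lift f,g,\lift g,V)=\RT(f',\lift f',g',\lift g',V)$, and likewise for $\RT'$, under the identification of $\Reid[f,g]$ with $\Reid[f',g']$ induced by the lifted homotopy.

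Finally, since $(f',g',V)$ is admissable, $\Coin(f',g',V)$ is compact, hence a finite set of isolated points, so Theorem \ref{weckentrace} applies to \emph{both} $\RT$ and $\RT'$ and expresses each of $\RT(f',\lift f',g',\lift g',V)$ and $\RT'(f',\lift f',g',\lift g',V)$ as the identical sum $\sum_{x\in\Coin(f',g',V)}\ind(f',g',V_x)\,[x_{\lift f',\lift g'}]$. Chaining the equalities back through $V$ and $U$ gives $\RT(f,\lift f,g,\lift g,U)=\RT'(f,\lift f,g,\lift g,U)$, which is the assertion.

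I expect the main obstacle to be the bookkeeping around lifts and Reidemeister classes under homotopy: checking that the homotopy produced by Lemma \ref{mfldisolation}, once lifted as above, really is an admissable homotopy of tuples in the precise sense of Section \ref{prelim}, and that the induced identification $\Reid[f,g]\cong\Reid[f',g']$ is the one under which the homotopy axiom is stated. The genuine topological input lives entirely in Theorem \ref{weckentrace} and Lemma \ref{mfldisolation}; what remains is careful packaging of that input.
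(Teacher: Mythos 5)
Your proposal is correct and follows essentially the same route as the paper's proof: reduce via Lemma \ref{mfldisolation} and the homotopy axiom to a tuple with isolated coincidence points, then invoke Theorem \ref{weckentrace} to get a formula independent of the particular trace. You are in fact somewhat more careful than the paper, which silently uses the excision $U\rightsquigarrow V$ and the lifting of the homotopy to obtain $\lift f',\lift g'$; your explicit additivity-with-$\emptyset$ argument and your flagging of the triple-to-tuple promotion fill in exactly those gaps.
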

\begin{proof}
Let $\RT$ be local Reidemeister trace, and take $(f, \lift f, g, \lift
g, U) \in 
\adm(X,Y)$. Then by Lemma \ref{mfldisolation} there is an open subset $V
\subset U$ with $\Coin(f,U) \subset V$ and maps $f', g'$ with isolated
coincidence points with $(f,g,V)$ admissably homotopic to
$(f',g',V)$. Then by the homotopy axiom there are lifts $\lift
f', \lift g'$ of $f$ and $g$ with 
\[ \RT(f,\lift f, g, \lift g, U) = \RT(f', \lift f', g', \lift g', V). \]

The coincidence points of $f'$ and $g'$ in $V$ are isolated, so we have
\[ \RT(f, \lift f, g, \lift g, U) = \sum_{x\in \Coin(f',g',V)}
\ind(f',g', V_x) [x_{\lift f', \lift g'}], \]
where $V_x$ is an isolating neighborhood of the coincidence point $x$.
This gives an explicit formula for the computation of $\RT(f, \lift f,
g, \lift g, U)$. The only choice made in the computation is of the
admissable homotopy to $(f', g', V)$, but any alternative choice
must give the same local Reidemeister trace by the homotopy axiom. Thus all
local Reidemeister traces must be computed in the same way, giving the same
result, which means that there can be only one. 
\end{proof}

\section{Uniqueness in fixed point theory} \label{fixpt}
In the special case where $Y=X$ and $g$ is taken to be the
identity map $\id:X \to X$, the above method can be used
with slight modifications to prove a uniqueness result for the local
Reidemeister trace in the fixed point theory of possibly nonorientable
manifolds. 

We have not in this paper made explicit use of the orientability
hypothesis, but it is a necessary hypothesis for the uniqueness of the
coincidence index in Theorem \ref{indexuniqueness}, which was used in Proposition
\ref{coeffs}. An accounting of orientations is needed in coincidence
theory to distinguish between points of index $+1$ and index $-1$
(though see \cite{dj93} for an approach to an index for nonorientable
manifolds, which does not always give an integer). Orientability is
not needed in local fixed point theory, since 
the notion of an orientation preserving selfmap is well-defined
locally, even on a manifold with no global orientation. Thus the
uniqueness of the fixed point index in \cite{fps04} does not require
orientability, and we will not require it here.

Let $\adm(X)$ be the set of all tuples of the form $(f,\lift f,
\liftinc, U)$,
where $f:X \to X$ is a selfmap, $\lift f: \lift X \to \lift X$ is a
lift of $f$, the map $\liftinc: \lift X \to \lift X$ is a lift of
the identity map, and $U$ is an open subset of $X$ with compact fixed point set
$\Fix(f,U) = \Coin(f,\id,U)$. Let $\Reid[f] = \Reid[f,\id]$. Two
tuples $(f,\lift f, \liftinc, 
U)$ and $(f',\lift f',\liftinc, U)$ are said to be admissably
homotopic if there is some homotopy $F_t$ of $f$ to $f'$ with $\{
(x,t) \mid F_t(x) = x \}$ compact, and $F_t$ lifts to a homotopy of
$\lift f$ to $\lift f'$. 

Our uniqueness theorem is then:
\begin{thm}\label{fixptuniqueness}
If $X$ is a (possibly nonorientable) differentiable manifold, then there is
a unique function taking an admissable tuple $(f,\lift f,\liftinc, U)$ to an
element of $\Z\Reid[f]$ satisfying the following axioms:
\begin{itemize}
\item (Additivity) If $U_1$ and $U_2$ are disjoint open subsets of $U$
  with $\Fix(f,U) \subset U_1 \cup U_2$, then
\[ \RT(f,\lift f,\liftinc, U) = \RT(f, \lift f,\liftinc, U_1) +
  \RT(f, \lift f,\liftinc, U_2) \]
\item (Homotopy) If $(f,\lift f,\liftinc, U)$ is admissably homotopic to
  $(f',\lift f',\liftinc, U)$, then
\[ \RT(f,\lift f,\liftinc, U) = \RT(f',\lift f',\liftinc, U) \]
\item (Weak normalization) If $f$ is a constant map, then
\[ c (\RT(f,\lift f,\liftinc, U)) = 1\]
\item (Lift invariance) For any $\alpha, \beta \in \pi_1(X)$, we have
\[ c(\RT(f,\lift f, \liftinc, U)) = c(\RT(f, \alpha\lift f, \beta
\liftinc, U)) \]
\item (\Coinax) If $[\alpha]$ appears with nonzero coefficient
  in $\RT(f,\lift f,\liftinc, U)$,
  then $\alpha \lift f$ and $\liftinc$ have a coincidence point on
  $p_X^{-1}(U)$.  
\end{itemize}
\end{thm}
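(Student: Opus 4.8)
The plan is to reproduce the argument of Section \ref{Suniqueness} with one structural change: the identification of the coefficient sum of $\RT$ with a local index is obtained from the uniqueness theorem of Furi, Pera and Spadini in \cite{fps04} rather than from Theorem \ref{indexuniqueness}. The former characterizes the fixed point index by additivity, homotopy, and \emph{weak} normalization (value $1$ on constant maps), and uses no orientability — which is exactly what is available here. Concretely, three steps are needed: an analogue of Proposition \ref{coeffs}, an analogue of Theorem \ref{weckentrace}, and an analogue of Lemma \ref{mfldisolation} which keeps the identity map — and hence the lift $\liftinc$ — fixed.

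\emph{Step 1.} Set $\omega = c \circ \RT$. The lift invariance axiom makes $\omega$ independent of $\lift f$ and $\liftinc$, so it descends to a function on admissable pairs $(f,U)$. The additivity axiom for $\RT$ gives additivity of $\omega$; the homotopy axiom gives homotopy invariance of $\omega$ (given an admissable homotopy of $f$ to $f'$, lift it starting at $\lift f$ and take $\lift f'$ to be the endpoint, so that the tuples are admissably homotopic and $\RT$, hence $\omega$, is unchanged); and weak normalization gives $\omega(f,U) = 1$ for constant $f$. By the uniqueness of the fixed point index in \cite{fps04}, valid for possibly nonorientable differentiable manifolds, $\omega(f,U) = \ind(f,U)$. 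As in Proposition \ref{coinprop}, a nonzero coefficient sum then forces a fixed point; and $\RT(f,\lift f,\liftinc,W) = 0$ whenever $\Fix(f,W) = \emptyset$, since by the \coinax\ axiom a class $[\alpha]$ occurring in it would produce a coincidence of $\alpha\lift f$ and $\liftinc$ over $W$, i.e.\ a fixed point of $f$ in $W$.

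\emph{Step 2.} With Step 1 in place, the proof of Theorem \ref{weckentrace} applies verbatim in the case $Y = X$, $g = \id$, $\lift g = \liftinc$: by additivity one reduces to an isolating neighborhood $U_x$ of a single fixed point $x$; the \coinax\ axiom shows $[x_{\lift f,\liftinc}]$ is the only class that can occur in $\RT(f,\lift f,\liftinc,U_x)$; and Step 1 forces its coefficient to be $\ind(f,U_x)$. Hence, if $f$ has isolated fixed points in $U$,
\[ \RT(f,\lift f,\liftinc,U) = \sum_{x \in \Fix(f,U)} \ind(f,U_x)\,[x_{\lift f,\liftinc}] . \]

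\emph{Step 3 and conclusion.} Given $(f,\lift f,\liftinc,U) \in \adm(X)$, pick an open $V$ with $\Fix(f,U) \subset V$ and $\bar V \subset U$ compact, and deform $(f,\id,V)$ through an admissable homotopy to $(f',\id,V)$ with $f'$ having only isolated fixed points in $V$; this is the classical Hopf perturbation (see e.g.\ \cite{fps04,stae07}), performed inside a neighborhood of $\Fix(f,V)$ so that the identity map, and with it $\liftinc$, is untouched. Lifting the homotopy from $\lift f$ yields $\lift f'$, and the additivity and homotopy axioms give
\[ \RT(f,\lift f,\liftinc,U) = \RT(f',\lift f',\liftinc,V) = \sum_{x \in \Fix(f',V)} \ind(f',V_x)\,[x_{\lift f',\liftinc}] , \]
using Step 2 for the last equality. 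Every term on the right is determined by $(f',\lift f')$, hence — once a deformation is fixed — by $(f,\lift f,\liftinc,U)$; and the homotopy axiom forces any other admissable choice of deformation to produce the same value. Thus any two functions satisfying the five axioms agree on all of $\adm(X)$, which is the asserted uniqueness; existence is supplied by the local fixed point Reidemeister trace of Fares and Hart \cite{fh94} (or by specializing the construction of Section \ref{existence} to $g=\id$), which one checks satisfies the weaker axiom list. The step I expect to demand the most care is Step 3: one must verify that the perturbation making the fixed points isolated is realized by a homotopy admissable in the sense of this section — compact fixed point set throughout, contained in $V$ — that lifts to a homotopy starting at the prescribed $\lift f$, all while leaving $\id$ (and therefore $\liftinc$) alone; the remainder is a routine transcription of Section \ref{Suniqueness}.
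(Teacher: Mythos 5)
Your proposal is correct and follows the same three-step path the paper uses: identify $c\circ\RT$ with the fixed point index via the Furi--Pera--Spadini uniqueness theorem (which only needs weak normalization and no orientability), reprove Theorem \ref{weckentrace} with $g=\id$, and then mimic the proof of Theorem \ref{uniqueness} using the fixed-point analogue of Lemma \ref{mfldisolation} from \cite{fps04}. Your extra care in Step 3 about leaving $\id$ (and $\liftinc$) undisturbed during the perturbation, and your note that the homotopy-lifting step automatically produces the required admissably homotopic tuples, simply makes explicit what the paper leaves implicit.
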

\begin{proof}
First we note that a result analagous to Proposition \ref{coeffs} can
be obtained in the fixed point setting using only the weak
normalization axiom: Using
the three axioms of \cite{fps04}, which make use of an appropriately
weakened normalization axiom, we see that $c\circ \RT$ is the fixed
point index.

Then letting $g = \id$ in the proof of Theorem \ref{weckentrace}, we
have that, if $f$ has isolated fixed points,
\[ \RT(f,\lift f, \liftinc, U) = \sum_{x \in \Fix(f,U_x)} \ind(f,U_x) [x_{\lift f,\liftinc}],
\]
where $\ind$ denotes the fixed point index, and $U_x$ is an isolating
neighborhood for the fixed point $x$.

A fixed point version of Lemma \ref{mfldisolation} can be found in
Lemmas 4.1 and 3.3 of \cite{fps04}, and the proof of Theorem
\ref{uniqueness} can be mimicked to obtain our uniqueness result.
\end{proof}

Note that the uniqueness in fixed point theory requires only a
weakened version of the normalization axiom. A uniqueness result for
coincidence theory using only the weak 
normalization axiom can be obtained if we restrict ourselves to
self-maps of a particular (not necessarily orientable) manifold. This
would use a proof similar to the above, using results from Section 5 of
\cite{stae07}. 

\section{Existence} \label{existence}
The existence of a local Reidemeister trace in fixed point theory for
connected finite 
dimensional locally compact polyhedra is established by Fares and Hart
in \cite{fh94}. There, the slightly more general local $H$-Reidemeister
trace is defined, called ``the local generalized $H$-Lefschetz
number''. An extension of this paper to the mod $H$ theory
would not be difficult. The fact that the mod $H$ Reidemeister classes
are unions of ordinary Reidemeister classes allows the same results to
be obtained without substantial modifications. 

In \cite{fh94}, the additivity and homotopy
axioms are proved in Proposition 3.2.9 and Proposition 3.2.8,
respectively. A strong version of the lift invariance axiom (see our
Theorem \ref{liftcoeffs}) is proved in Proposition 3.2.4. The
\coinax\ axiom is not stated explicitly by Fares and Hart, but is a
straightforward consequence of their trace-like definition
(if some $[\alpha]$ has nonzero coefficient in the Reidemeister trace,
it neccesarily comes from some simplex in the covering space containing
a fixed point of $\alpha\lift f$). A result analogous to the Wecken
Trace Theorem (which trivially implies the normalization and weak
normalization axioms) is given in Theorem 
3.3.1.

No Reidemeister trace for coincidence theory, either local or global,
has appeared previously in the literature. The proof of Theorem
\ref{uniqueness} furnishes the appropriate definition, as follows:
Given an admissable tuple $(f,\lift f, g, \lift g,
U)$, we find (by Lemma \ref{mfldisolation}) an admissably homotopic
tuple $(f',\lift f', g', \lift g', V)$ with isolated coincidence
points, and we define
\[ \RT(f,\lift f,g, \lift g, U) = \sum_{x \in \Coin(f',g',V)}
\ind(f',g',V_x) [x_{\lift f',\lift g'}], \]
where $V_x$ is an isolating neighborhood for the coincidence point $x$.
The above is well defined provided that it is independent of the
choice of the admissably homotopic tuple. This is ensured by the
following lemma:
\begin{lem}
If $(f,\lift f, g, \lift g, U)$ and $(f', \lift f', g', \lift g', U)$
are admissably homotopic tuples with isolated coincidence points, then 
\[ \sum_{x \in \Coin(f,g,U)} \ind(f,g,U_x)[x_{\lift f, \lift g}] =
  \sum_{x' \in \Coin(f',g',U)} \ind(f',g',U_{x'}) [x'_{\lift f',\lift
  g'}], \]
where $U_x$ is an isolating neighborhood for the coincidence point $x
\in \Coin(f,g,U)$, and $U_{x'}$ is an isolating neighborhood of the
coincidence point $x' \in \Coin(f',g',U)$.
\end{lem}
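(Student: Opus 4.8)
The plan is to reduce the equality of the two sums to an invariance statement about individual coincidence classes along an admissable homotopy, and then to match classes on the two sides using the coincidence-class structure provided by Lemma \ref{coinlift}. First I would set up the admissable homotopy $F_t,G_t$ from $(f,g)$ to $(f',g')$, together with its lifts $\lift F_t,\lift G_t$ (unique once $\lift F_0=\lift f$, $\lift G_0=\lift g$ are fixed, with $\lift F_1=\lift f'$, $\lift G_1=\lift g'$ by the definition of admissable homotopy of tuples). The compactness condition on $\{(x,t)\in U\times[0,1]\mid F_t(x)=G_t(x)\}$ is exactly what makes the total coincidence set of the homotopy a compact subset of $U\times[0,1]$, so by covering it with finitely many isolating product neighborhoods one can partition $U$ into finitely many open pieces, on each of which the homotopy carries a single ``tube'' of coincidence points from $(f,g)$ to $(f',g')$.

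Next I would invoke the homotopy invariance of the coincidence index (a standard property, or one can derive it from Theorem \ref{indexuniqueness} exactly as in Proposition \ref{coeffs}): on each such piece the total index of the coincidence points of $(f,g)$ inside it equals the total index of the coincidence points of $(f',g')$ inside it. So it remains to check that all coincidence points of $(f,g)$ lying in one tube, and all coincidence points of $(f',g')$ in the same tube, carry the \emph{same} Reidemeister class, i.e. that $[x_{\lift f,\lift g}]=[x'_{\lift f',\lift g'}]$ whenever $x$ and $x'$ are connected by a path of coincidence points of the homotopy. This is the class-tracking step. Given a path $t\mapsto \gamma(t)$ in $U$ with $F_t(\gamma(t))=G_t(\gamma(t))$, lift it to a path $\lift\gamma$ in $\lift X$; then $t\mapsto \lift F_t(\lift\gamma(t))$ and $t\mapsto \lift G_t(\lift\gamma(t))$ are paths in $\lift Y$ that project to the same path in $Y$, so the covering transformation $\alpha_t$ with $\alpha_t\lift F_t(\lift\gamma(t))=\lift G_t(\lift\gamma(t))$ is locally constant in $t$, hence constant; this constant $\alpha$ witnesses $x\in C(\lift f,\lift g,\lift U,[\alpha])$ at $t=0$ and $x'\in C(\lift f',\lift g',\lift U,[\alpha])$ at $t=1$, and by Lemma \ref{coinlift} the class $[\alpha]\in\Reid[f,g]$ is the same object before and after the homotopy (the Reidemeister set is canonically identified along the homotopy since $\phi,\psi$ are unchanged). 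Summing the index identity piece by piece, grouping terms on each side by Reidemeister class, and using that distinct classes give disjoint coincidence classes (second bullet of Lemma \ref{coinlift}) to see there is no cross-cancellation, yields the claimed equality of the two $\Z\Reid[f,g]$-valued sums.

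The main obstacle is the bookkeeping in the class-tracking step: making rigorous the passage from ``the coincidence set of the homotopy is compact'' to ``coincidence points of $(f,g)$ and of $(f',g')$ are matched up into finitely many tubes each carrying a well-defined class.'' The coincidence set of the homotopy need not be a disjoint union of arcs — it is merely a compact set — so one cannot literally speak of ``the tube through $x$.'' The clean way around this is not to decompose the coincidence set but to decompose $U$: cover the compact set $\Coin(F,G,U\times[0,1])$ by finitely many open boxes $W_i\times(a_i,b_i)$ on which the restricted homotopy is admissable, refine to get open sets $U_1,\dots,U_n\subset U$ covering $\Coin(f,g,U)\cup\Coin(f',g',U)$ such that for each $i$ the triple $(F_t,G_t,U_i)$ is admissably homotopic (over all of $[0,1]$) through admissable triples, and then argue that the locally-constant covering transformation $\alpha$ is globally constant on the connected coincidence set inside each $U_i\times[0,1]$ — or, if that set is disconnected, refine further until it is. Once the pieces are small enough that the constancy argument applies, the additivity of both sides (Additivity axiom on the $\RT$ side, additivity of the index on the other) and the homotopy invariance of the index finish the proof; I expect the only genuinely delicate point to be verifying that such a finite refinement exists, which follows from compactness together with Lemma \ref{mfldisolation}-style local arguments, and I would present that refinement explicitly before doing the class computation.
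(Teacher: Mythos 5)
Your proposal diverges from the paper's proof in a way that introduces a genuine gap. The paper cites Brooks's Theorems II.22 and IV.24, which establish a bijection between essential coincidence classes of $(f,g)$ and $(f',g')$ preserving index, using Brooks's notion of $(F_t,G_t)$-relatedness: two points $x,x'$ are related if there is a path $\gamma$ from $x$ to $x'$ such that $F_t(\gamma(t))$ and $G_t(\gamma(t))$ are \emph{homotopic rel endpoints}. This is deliberately much weaker than requiring $F_t(\gamma(t)) = G_t(\gamma(t))$. The paper then does the covering-space computation to identify the class $[\alpha]$ using exactly such a path $\gamma$, lifting it and using that $\lift F_t(\lift\gamma(t))$ is homotopic to $\lift G_t(\lift\gamma(t))$ (hence they have a common endpoint). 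Your class-tracking step, by contrast, needs a path $\gamma(t)$ actually lying in the coincidence set of the homotopy, $F_t(\gamma(t)) = G_t(\gamma(t))$ for all $t$, and then argues the covering transformation $\alpha_t$ is locally constant and hence constant. Such a path need not exist: $\Coin(F,G,U\times[0,1])$ is merely compact, and a connected component linking a time-$0$ coincidence point to a time-$1$ coincidence point may fail to be path-connected, or the component through a given $x\in\Coin(f,g,U)$ may simply not reach $t=1$ at all even though the class of $x$ is essential.

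Your proposed fix — shrinking to pieces $U_i$ until the coincidence set inside $U_i\times[0,1]$ is connected — does not close the gap. First, the coincidence set can have infinitely many components (or components that are connected but not path-connected), so no finite refinement of $U$ by open sets can isolate them each into a single connected piece. Second, even when a component is connected, your argument as written still invokes a path; the correct replacement is local constancy of the Reidemeister-class-valued map on the coincidence set plus connectedness, but that only helps on components reaching both $t=0$ and $t=1$, and you would still need to explain how to handle components that appear or vanish in the interior of the time interval, where essential classes can cancel against classes born mid-homotopy. Sorting this out correctly is essentially the content of Brooks's Theorem II.22, which handles births and deaths via the weaker homotopy-rel-endpoints relation rather than by following arcs in the coincidence set. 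The cleaner route, as in the paper, is to quote Brooks's results on $(F_t,G_t)$-relatedness and then do only the covering-space bookkeeping to convert ``related'' into ``same $[\alpha]$''; alternatively, you could perturb the homotopy to make its coincidence set a compact $1$-manifold with boundary in $t\in\{0,1\}$, but then you must separately justify that the perturbation does not change either side of the identity, which is additional work your sketch does not address.
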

\begin{proof}
We define the index of a coincidence class $C$ of $f$ and $g$ as follows: 
\[ \ind C = \sum_{x \in C} \ind(f,g,U_x). \]
A class is called \emph{essential} if its index is nonzero.

Since $f$ and $g$ are homotopic to $f'$ and $g'$, we have $\Reid[f,g]
= \Reid[f',g']$. Call this common set of Reidemeister classes $R$.
Letting $\lift U = p_X^{-1}(U)$, the statement of the Lemma is equivalent to
\[ \sum_{[\alpha] \in R} \ind C(\lift f,\lift g, \lift U, [\alpha]) [\alpha] =
\sum_{[\alpha] \in R} \ind C(\lift f',\lift g', \lift U, [\alpha]) [\alpha], \]
and we need only show that $\ind
C(\lift f,\lift g,\lift U,[\alpha]) = \ind C(\lift f',\lift g', \lift
U, [\alpha])$ for any $[\alpha]$. We will prove this using Brooks's notion of
homotopy-relatedness of coincidence classes, exposited in detail in
\cite{broo67} and briefly in \cite{bb69}. 

Let $F_t, \lift F_t, G_t, \lift G_t$ be homotopies realizing the
admissable homotopy of 
$(f,\lift f, g, \lift g, U)$ and $(f',\lift f', g', \lift g', U)$. Two
coincidence points $x \in \Coin(f,g,U)$ and $x' \in \Coin(f',g',U)$
are \emph{$(F_t,G_t)$--related} if there is some path $\gamma(t)$ in
$X$ connecting $x$ to $x'$ such that the paths $F_t(\gamma(t))$ and
$G_t(\gamma(t))$ are homotopic in $Y$ as paths with fixed
endpoints. Two coincidence classes are related if at least one point
of one is related to at least one point of the other.
Theorem II.22 of \cite{broo67} shows that the notion of
$(F_t,G_t)$-relatedness gives a bijective correspondence between
the essential coincidence classes of $(f,g)$ and those of
$(f',g')$. Theorem IV.24 of \cite{broo67} further shows that any two
such related classes will have the same index.

What remains is an elementary argument using covering-space
theory. Let $C = C(\lift f, \lift g, \lift U, [\alpha])$, and let $C'$ be the
unique coincidence class of $(f',g')$ which is $(F_t,G_t)$-related to
$C$. We need only show that $C' = C(\lift f', \lift g', \lift U, [\alpha])$,
and thus (since homotopy-relatedness preserves the index) that $\ind
C(\lift f, \lift g, \lift U, [\alpha]) = \ind C(\lift f', \lift g', \lift U, [\alpha])$.

Choose a point $x \in C$, and let $x'$ be a point
in $C'$ which is $(F_t,G_t)$ related to $x$. Then there is some path
$\gamma$ in $X$ from $x$ to $x'$ with $F_t(\gamma(t))$ homotopic to
$G_t(\gamma(t))$. 
Let $\lift x$ be some point with $p_X(\lift x) = x$ and $\alpha
\lift f (\lift x) = \lift g(\lift x)$. We can lift $\gamma$ to a path
$\lift \gamma$ in $\lift X$ starting at $\lift x$. Since
$F_t(\gamma(t))$ is homotopic to $G_t(\gamma(t))$, we will have $\lift
F_t(\lift \gamma(t))$ homotopic to $\lift G_t(\lift \gamma(t))$, which
in particular means that they will have the same endpoint. This
common endpoint is $\alpha \lift f'(\lift \gamma(1)) = \lift
g'(\lift \gamma(1))$, which must project by $p_X$ to the point
$x'$. Thus $x' \in p_X(\Coin(\alpha \lift f', \lift g', \lift U))$,
and so $C' = C(\lift f', \lift g', \lift U, [\alpha])$, as desired.
\end{proof}

We have thus produced a meaningful definition of a local coincidence
Reidemeister trace on orientable differentiable manifolds of the same
dimension, and the proof above suffices to give:
\begin{thm}[Wecken Coincidence Trace Theorem]\label{wtt}
Let $\RT$ be the unique local coincidence Reidemeister trace
satisfying our five axioms. Then for any $(f, \lift f, g, \lift g, U)
\in \adm(X,Y)$ with $\lift U = p_X^{-1}(U)$, we have 
\[ \RT(f,\lift f, g, \lift g, U) = \sum_{[\alpha] \in \Reid[f,g]} \ind
C(\lift f, \lift g, \lift U, [\alpha]) [\alpha]. \]
\end{thm}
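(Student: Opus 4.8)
The plan is to combine the uniqueness machinery of Section \ref{Suniqueness} with the index-identification argument in the preceding lemma. Since $\RT$ is \emph{the} local Reidemeister trace, Theorem \ref{uniqueness} guarantees it must be computed by the explicit formula in the proof of that theorem: pass via Lemma \ref{mfldisolation} to an admissably homotopic tuple $(f',\lift f',g',\lift g',V)$ with isolated coincidence points, and read off $\RT(f,\lift f,g,\lift g,U) = \sum_{x\in\Coin(f',g',V)}\ind(f',g',V_x)[x_{\lift f',\lift g'}]$. So the theorem reduces to showing that this sum equals $\sum_{[\alpha]\in\Reid[f,g]}\ind C(\lift f,\lift g,\lift U,[\alpha])[\alpha]$.

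First I would rewrite the isolated-points formula by grouping coincidence points according to their Reidemeister class. Using the definition $[x_{\lift f',\lift g'}]$ — the unique $[\alpha]$ with $x\in C(\lift f',\lift g',\lift V,[\alpha])$ — and the definition $\ind C = \sum_{x\in C}\ind(f',g',V_x)$ introduced in the lemma, the sum over $\Coin(f',g',V)$ reorganizes as $\sum_{[\alpha]\in\Reid[f',g']}\ind C(\lift f',\lift g',\lift V,[\alpha])[\alpha]$. This is a purely bookkeeping step, relying on Lemma \ref{coinlift} to know that the coincidence classes $C(\lift f',\lift g',\lift V,[\alpha])$ partition $\Coin(f',g',V)$ and are indexed unambiguously by Reidemeister classes.

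Next I would invoke the lemma just proved: since $(f,\lift f,g,\lift g,U)$ and $(f',\lift f',g',\lift g',V)$ are admissably homotopic (with isolated coincidence points after the perturbation), $\Reid[f,g]=\Reid[f',g']$ and $\ind C(\lift f,\lift g,\lift U,[\alpha]) = \ind C(\lift f',\lift g',\lift V,[\alpha])$ for every class $[\alpha]$. There is a small technical point here: the lemma as stated takes two homotopic tuples on the \emph{same} open set $U$, whereas the uniqueness construction shrinks $U$ to $V$; I would note that, since $\Coin(f,g,U)=\Coin(f,g,V)$ and coincidence classes are determined by the coincidence set together with the lift data, passing from $U$ to $V$ does not change any $\ind C$, so the lemma applies verbatim to $(f,\lift f,g,\lift g,V)$ and $(f',\lift f',g',\lift g',V)$, and additivity (or the fact that there are no coincidences in $U\setminus V$) identifies this with the value on $U$.

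The main obstacle I anticipate is precisely this matching of open-set domains and the careful verification that the perturbed lifts $\lift f',\lift g'$ produced by the homotopy axiom are the \emph{same} lifts appearing in the explicit uniqueness formula — that is, that the admissable homotopy used for the isolation lemma can be taken to lift starting at $\lift f,\lift g$ and ending at exactly the $\lift f',\lift g'$ used to evaluate the formula. This was already handled inside the proof of Theorem \ref{uniqueness}, so I would simply cite that, but it is the place where a reader might want reassurance. Everything else — the regrouping by Reidemeister class and the appeal to the homotopy-invariance of $\ind C$ — is immediate from the results already in hand.
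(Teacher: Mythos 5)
Your proposal is correct and is essentially the paper's own (unstated) argument: the paper offers no separate proof of Theorem~\ref{wtt}, merely the remark that ``the proof above suffices,'' and you have accurately unpacked what that means --- compute $\RT$ via the isolated-points formula from Theorem~\ref{uniqueness}, regroup by Reidemeister class to get $\sum_{[\alpha]}\ind C(\lift f',\lift g',\lift V,[\alpha])[\alpha]$, and then transfer each class index back to the original data using Brooks's homotopy-relatedness results. One small imprecision worth correcting: you say ``the lemma applies verbatim to $(f,\lift f,g,\lift g,V)$ and $(f',\lift f',g',\lift g',V)$,'' but the lemma as stated requires \emph{both} tuples to have isolated coincidence points, and $(f,g)$ need not; what you actually want to invoke is the lemma's \emph{proof} (Theorems~II.22 and~IV.24 of Brooks, which establish the bijection between essential classes and preservation of $\ind C$ under admissable homotopy, with no isolation hypothesis), together with the understanding that $\ind C$ for the non-isolated tuple is the usual Nielsen-theoretic index of a coincidence class rather than the sum-of-point-indices formula used inside the lemma.
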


In conclusion we prove a stronger form of the lift invariance axiom, a
coincidence version of a well-known property of the Reidemeister trace.
\begin{thm} \label{liftcoeffs} Let $\RT$ be the unique local
coincidence Reidemeister trace satisfying our five axioms. If 
\[ \RT(f,\lift f, g, \lift g, U) = \sum_{[\sigma] \in \Reid[f,g]}
k_{[\sigma]} [\sigma] \]
for $k_{[\sigma]} \in \Z$, then for any $\alpha, \beta \in \pi_1(Y)$,
we have
\[
\RT(f, \alpha \lift f, g, \beta \lift g, U) = \sum_{[\sigma] \in
  \Reid[f,g]} k_{[\sigma]} [\beta \sigma \alpha^{-1}]. \]
\end{thm}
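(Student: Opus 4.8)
The plan is to reduce the statement to a computation on the explicit formula provided by the Wecken Coincidence Trace Theorem (Theorem \ref{wtt}). Since $\RT$ is now known to be the unique local coincidence Reidemeister trace, we may freely use the closed form
\[ \RT(f,\alpha\lift f, g, \beta\lift g, U) = \sum_{[\gamma] \in \Reid[f,g]} \ind C(\alpha\lift f, \beta\lift g, \lift U, [\gamma])\,[\gamma], \]
so everything comes down to understanding how the coincidence classes $C(\alpha\lift f, \beta\lift g, \lift U, [\gamma])$ (computed with respect to the new lift pair) relate to the classes $C(\lift f, \lift g, \lift U, [\sigma])$ (computed with respect to the old pair). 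The key observation is the identity $\Coin(\gamma(\alpha\lift f), \beta\lift g, \lift U) = \Coin((\gamma\alpha)\lift f, \beta\lift g, \lift U)$, and then a further rewriting: a point $\lift x$ satisfies $(\gamma\alpha)\lift f(\lift x) = \beta\lift g(\lift x)$ if and only if $(\beta^{-1}\gamma\alpha)\lift f(\lift x) = \lift g(\lift x)$, because $\beta$ acts as a covering transformation of $\lift Y$ and hence is injective. Therefore
\[ C(\alpha\lift f, \beta\lift g, \lift U, [\gamma]) = p_X\Coin\bigl((\beta^{-1}\gamma\alpha)\lift f, \lift g, \lift U\bigr) = C(\lift f, \lift g, \lift U, [\beta^{-1}\gamma\alpha]). \]

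With that translation in hand, the rest is bookkeeping. First I would check that the assignment $[\gamma] \mapsto [\beta^{-1}\gamma\alpha]$ is a well-defined bijection $\Reid[f,g] \to \Reid[f,g]$: if $\gamma = \psi(\sigma)^{-1}\gamma'\phi(\sigma)$ then $\beta^{-1}\gamma\alpha = \psi(\sigma)^{-1}(\beta^{-1}\gamma'\alpha)\phi(\sigma)$, so the map descends to Reidemeister classes and is its own inverse up to the evident composition, hence bijective. Combining this with the displayed identity for coincidence classes and the index definition $\ind C = \sum_{x \in C}\ind(f,g,U_x)$ (which depends only on the underlying subset of $\Coin(f,g,U)$, not on the lifts), we get $\ind C(\alpha\lift f, \beta\lift g, \lift U, [\gamma]) = \ind C(\lift f, \lift g, \lift U, [\beta^{-1}\gamma\alpha])$. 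Substituting into the Wecken formula and reindexing the sum via $[\sigma] = [\beta^{-1}\gamma\alpha]$, equivalently $[\gamma] = [\beta\sigma\alpha^{-1}]$, yields
\[ \RT(f,\alpha\lift f, g, \beta\lift g, U) = \sum_{[\sigma]\in\Reid[f,g]} \ind C(\lift f,\lift g,\lift U,[\sigma])\,[\beta\sigma\alpha^{-1}] = \sum_{[\sigma]\in\Reid[f,g]} k_{[\sigma]}[\beta\sigma\alpha^{-1}], \]
where the last equality again invokes Theorem \ref{wtt} to identify $k_{[\sigma]} = \ind C(\lift f,\lift g,\lift U,[\sigma])$.

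The main obstacle — really the only subtle point — is making sure the class-level identity $C(\alpha\lift f,\beta\lift g,\lift U,[\gamma]) = C(\lift f,\lift g,\lift U,[\beta^{-1}\gamma\alpha])$ is stated with the correct twisting on both sides and is genuinely independent of all the choices of lifts within each class, which is exactly what Lemma \ref{coinlift} guarantees; one has to track the order of composition carefully since $\pi_1(Y)$ need not be abelian and the doubly twisted conjugacy relation is one-sided in a specific way. Once the indices are pinned down, I would also want to double-check the edge case where $\Coin(f,g,U)$ is not isolated, but this is handled automatically: Theorem \ref{wtt} already holds for arbitrary admissable tuples, so no separate approximation argument is needed here. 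Everything else is routine reindexing.
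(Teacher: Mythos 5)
Your main argument matches the paper's proof of Theorem \ref{liftcoeffs} essentially line by line: apply Theorem \ref{wtt} to identify $k_{[\sigma]}$ with $\ind C(\lift f,\lift g,\lift U,[\sigma])$, establish the set-level identity $C(\alpha\lift f,\beta\lift g,\lift U,[\sigma]) = C(\lift f,\lift g,\lift U,[\beta^{-1}\sigma\alpha])$ by the direct computation $\sigma\alpha\lift f(\lift x)=\beta\lift g(\lift x) \iff \beta^{-1}\sigma\alpha\lift f(\lift x)=\lift g(\lift x)$, and then apply Theorem \ref{wtt} again and reindex. That is exactly the paper's proof, and it is correct.

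One remark on the extra check you added, which the paper does not attempt. Your claimed verification that $[\gamma]\mapsto[\beta^{-1}\gamma\alpha]$ descends to Reidemeister classes --- ``if $\gamma=\psi(\sigma)^{-1}\gamma'\phi(\sigma)$ then $\beta^{-1}\gamma\alpha=\psi(\sigma)^{-1}(\beta^{-1}\gamma'\alpha)\phi(\sigma)$'' --- is not an identity in a nonabelian $\pi_1(Y)$: it would require $\beta^{-1}$ to commute with $\psi(\sigma)^{-1}$ and $\alpha$ to commute with $\phi(\sigma)$. The manipulation does work if on the left-hand side you use the homomorphisms $\phi'=\alpha\phi\alpha^{-1}$ and $\psi'=\beta\psi\beta^{-1}$ induced by the \emph{new} lifts $\alpha\lift f,\beta\lift g$, which is the correct bookkeeping since $\phi,\psi$ are determined by the choice of lift; under the twisted-conjugacy relation for $(\phi',\psi')$, conjugating and pushing $\beta^{-1},\alpha$ through does produce twisted conjugacy for $(\phi,\psi)$. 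Since this was a supplementary observation rather than a step your main chain of equalities depends on, it does not affect the correctness of the proof itself, but as written the commutation claim is false.
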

\begin{proof}
Letting $\lift U = p_X^{-1}(U)$, by Theorem \ref{wtt} we know
that $k_{[\sigma]} = \ind C(\lift f, \lift g, \lift U, [\sigma])$. Then we have
\[
C(\alpha \lift f, \beta \lift g, [\sigma]) = p_X \Coin(\sigma
\alpha \lift f, \beta \lift g, \lift U) = p_X \Coin(\beta^{-1} \sigma \alpha
\lift f, \lift g, \lift U) 
= C(\lift f, \lift g, [\beta^{-1} \sigma \alpha]), \]
and thus $\ind C(\alpha \lift f, \beta \lift g, \lift U, [\sigma]) =
k_{[\beta^{-1} \sigma \alpha]}$. Now by Theorem \ref{wtt} again, we have 
\begin{align*} 
\RT(f,\alpha \lift f, g, \beta \lift g, \lift U) &= \sum_{[\sigma] \in
\Reid[f,g]} \ind C(\alpha \lift f, \beta\lift g, [\sigma]) [\sigma] =
\sum_{[\sigma] \in \Reid[f,g]} k_{[\beta^{-1} \sigma \alpha]} [\sigma] \\
&= \sum_{[\gamma] \in \Reid[f,g]} k_{[\gamma]} [\beta \gamma
\alpha^{-1}], 
\end{align*}
as desired.
\end{proof}

\end{document}